\newtheorem{obs}{Observation}
\newtheorem{prop}{Proposition}
\newtheorem{thm}{Theorem}
\DeclareMathOperator*{\supp}{supp}
\DeclareMathOperator*{\argmax}{argmax}
\begin{document}

\title{Deconvolution with a Box}
\author{Pedro F. Felzenszwalb \\
  Brown University \\
  {\tt pff@brown.edu}}
\date{}
\maketitle

\maketitle

Abstract: Deconvolution with a box (square wave) is a key operation
for super-resolution with pixel-shift cameras.  In general convolution
with a box is not invertible.  However, we can obtain perfect
reconstructions of sparse signals using convex optimization.  We give a
direct proof that improves on the reconstruction bound that follows
from the general results in \cite{Donoho1989}.  We also show our bound
is tight and matches an information theoretic limit.

In a pixel-shift camera we can move a low-resolution sensor in a grid
of high-resolution locations.  By taking a series of pictures and
re-arranging the data we obtain the result of convolving a
high-resolution target with a box.  Our goal is to recover the
high-resolution target.  See Figure~\ref{fig:result} for a numerical
experiment simulating this process.

\begin{figure}[h]
  \centering
  \begin{tabular}{ccc}
  \includegraphics[scale=0.8]{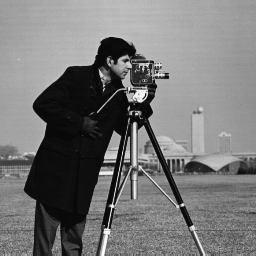} &
  \includegraphics[scale=0.8]{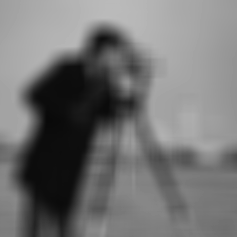} &
  \includegraphics[scale=0.8]{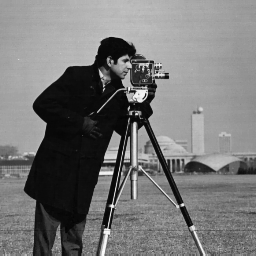} \\
  (a) & (b) & (c)
  \end{tabular}
  \caption{(a) 256x256 high-resolution target; (b) ``valid'' part of the
    convolution of the target with a box of width 20, obtained by
    ``scanning'' the target using a low-resolution sensor (each sensor
    pixel is 20 times the size of the target pixel size); (c)
    reconstruction using convex optimization.}
  \label{fig:result}
\end{figure}

Let $x \in \mathbb{R}^n$ be a discrete (unknown) signal of length $n$.  

Let $\supp(x) \subseteq \{1,\ldots,n\}$ denote the indices where $x$ is nonzero.

Let $y = h \otimes x$ where $h$ is a box of width $k$ and $\otimes$
denotes either regular or circular convolution.  Equivalently $y=Ax$
or $y=Bx$ where $A$ and $B$ are the Toeplitz matrices defined below.

Convolution with a box of width $k$ computes sums of $k$ consecutive
values in $x$.  The ``valid'' part (defined without zero padding) of the
regular convolution $h \otimes x$ is given by an $(n-k+1) \times n$
matrix.  When $n=6$ and $k=3$ we have
$$A =
\begin{pmatrix}
1 & 1 & 1 & 0 & 0 & 0 \\
0 & 1 & 1 & 1 & 0 & 0 \\
0 & 0 & 1 & 1 & 1 & 0 \\
0 & 0 & 0 & 1 & 1 & 1 
\end{pmatrix}.$$

In the circular case we have an $n \times n$ matrix obtained from $A$ by adding $k-1$ rows,
$$B =
\begin{pmatrix}
1 & 1 & 1 & 0 & 0 & 0 \\
0 & 1 & 1 & 1 & 0 & 0 \\
0 & 0 & 1 & 1 & 1 & 0 \\
0 & 0 & 0 & 1 & 1 & 1 \\
1 & 0 & 0 & 0 & 1 & 1 \\
1 & 1 & 0 & 0 & 0 & 1 
\end{pmatrix}.$$

We would like to recover $x$ from $y=Ax$ (or $y=Bx$).  We show that
when $x$ is sparse we can recover $x$ by minimizing $||x||_1$ subject
to the constraints imposed by $y$.

In practice many signals such as ``natural'' images are not sparse but
are piecewise constant and have sparse derivatives.  In the one
dimensional case we can minimize $||Dx||_1$ where $D$ is a finite
difference operator (see \cite{Candes06}).  For the two-dimensional
reconstruction in Figure~\ref{fig:result} we select a solution $x$
minimizing a weighted sum $||Ax-y||_2^2 + \lambda ||\nabla x||_1$.
This example illustrates the approach leads to stable reconstructions
even when we increase the resolution by large factor. This is in
contrast to the theoretical and practical limits of super-resolution
described in \cite{Baker02} and \cite{Lin04}.

Our approach does not rely on frequency domain concepts.  Instead we
characterize the kernel of $A$ by recalling that convolution with a
box can be implemented by a recursive filter,
$$(Ax)_{i+1} =
  (Ax)_{i} + x_{i+k} - x_{i}.$$

\begin{prop}
  $z \in \ker(A)$ if and only if $z_j = z_{j \bmod k}$ and $\sum_{j=1}^k
  z_j = 0$.
\end{prop}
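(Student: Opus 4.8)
The plan is to turn the kernel condition into a first-order recurrence by exploiting the recursive-filter identity $(Az)_{i+1} = (Az)_i + z_{i+k} - z_i$ recalled just above the statement. First I would unwind the definition: $z \in \ker(A)$ means precisely that every window sum $(Az)_i = \sum_{j=i}^{i+k-1} z_j$ vanishes, for $i = 1, \ldots, n-k+1$. Both directions then reduce to reading this recurrence, once backwards and once forwards.

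For the forward direction, assume $z \in \ker(A)$. Since $(Az)_{i+1}$ and $(Az)_i$ are both zero, the identity forces $z_{i+k} - z_i = 0$, that is $z_{i+k} = z_i$, for every $i = 1, \ldots, n-k$. Iterating this relation shows that the value of $z_j$ depends only on the residue of $j$ modulo $k$, which is the periodicity claim $z_j = z_{j \bmod k}$. The remaining requirement $\sum_{j=1}^k z_j = 0$ is nothing more than the single equation $(Az)_1 = 0$.

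For the converse I would run the same recurrence forward by induction on $i$. The base case is $(Az)_1 = \sum_{j=1}^k z_j = 0$, which holds by hypothesis. For the inductive step, periodicity gives $z_{i+k} = z_i$, so the identity yields $(Az)_{i+1} = (Az)_i + 0 = (Az)_i = 0$. Hence every window sum vanishes and $z \in \ker(A)$, completing the equivalence.

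I expect the only delicate point to be bookkeeping with the index ranges and the intended meaning of the shorthand $z_j = z_{j \bmod k}$: the relation $z_{i+k} = z_i$ is available only for $i$ up to $n-k$, and I would check that this is exactly enough to link every index to its representative in $\{1,\ldots,k\}$, so that no coordinate is left unconstrained. Beyond this indexing care there is no analytic difficulty; the entire content lies in recognizing the telescoping structure that the recursive filter makes explicit.
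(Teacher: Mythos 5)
Your proof is correct and follows essentially the same route as the paper: the forward direction (reading $z_{i+k}=z_i$ off the recursive-filter identity, then using $(Az)_1=0$) is identical, and your converse by induction on the recurrence is just a telescoped rephrasing of the paper's observation that every window of $k$ consecutive entries is a cyclic shift of $(z_1,\ldots,z_k)$ and hence sums to zero. Your attention to the index range $i \le n-k$ and the convention behind $z_{j \bmod k}$ is a welcome bit of extra care that the paper leaves implicit.
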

\begin{proof}
  Suppose $z \in \ker(A)$.  For any signal $x$ we have $(Ax)_{i+1} =
  (Ax)_{i} + x_{i+k} - x_{i}$.  Since $(Az)_i = 0$ for all $i$ this
  implies $z_{j+k} = z_j$.  Since $(Az)_1 = 0$ we have $\sum_{j=1}^k
  z_j = 0$.  For the other direction, note that if $z_j = z_{j \bmod
    k}$ then any $k$ consecutive terms of $z$ equal to
  $(z_1,\ldots,z_k)$ after a cyclic shift.  Since the sum of those
  terms is zero we have $z \in \ker(A)$.
\end{proof}

\begin{obs}
  $\ker(B) \subseteq \ker(A)$.
\end{obs}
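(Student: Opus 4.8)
The plan is to exploit the fact that $A$ is literally the top block of $B$: comparing the two definitions, the first $n-k+1$ rows of $B$ are exactly the rows of $A$, while the remaining $k-1$ rows are the ``wrap-around'' rows coming from the circular convolution. So I would begin by recording this structural relation explicitly. If $P = [\, I_{n-k+1} \mid 0 \,]$ denotes the $(n-k+1)\times n$ matrix that keeps the first $n-k+1$ coordinates, then the relation reads $A = PB$.

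From here the argument is immediate. Take any $z \in \ker(B)$, so that $Bz = 0$; applying $P$ gives $Az = PBz = P \cdot 0 = 0$, hence $z \in \ker(A)$. Equivalently, and without introducing $P$ at all: the condition $Bz = 0$ says that every row of $B$ has zero inner product with $z$, and since each row of $A$ is also a row of $B$, every row of $A$ has zero inner product with $z$ as well, which is precisely the statement $Az = 0$.

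There is essentially no real obstacle here — the only point requiring care is checking that $A$ genuinely is the top $n-k+1$ rows of $B$ (rather than a permuted or otherwise altered copy), which one reads off directly from the definitions, and which is visible in the $n=6$, $k=3$ example where the first four rows of $B$ coincide with $A$. An alternative route would combine the Proposition's characterization of $\ker(A)$ with analogous $k$-periodicity and zero-sum conditions derived for $\ker(B)$, but that is strictly more work, since the row-containment observation already yields the desired inclusion in one step.
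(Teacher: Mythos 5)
Your proof is correct and matches the paper's argument exactly: the paper justifies this observation by noting that $B$ is obtained from $A$ by adding rows, which is precisely your row-containment (equivalently, $A = PB$) argument. No gaps.
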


\begin{obs}
  When $k$ divides $n$ $\ker(B) = \ker(A)$.
\end{obs}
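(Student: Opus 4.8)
The plan is to exploit the previous observation, which already gives $\ker(B) \subseteq \ker(A)$ unconditionally, and reduce the claim to establishing the reverse inclusion $\ker(A) \subseteq \ker(B)$ under the hypothesis $k \mid n$. So I would begin by fixing an arbitrary $z \in \ker(A)$ and invoking the Proposition: $z$ is $k$-periodic in the sense $z_{j+k} = z_j$, and $\sum_{j=1}^k z_j = 0$. The goal then becomes showing $Bz = 0$.

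Next I would compare $A$ and $B$ row by row. Since $B$ is obtained from $A$ by appending $k-1$ rows, its first $n-k+1$ rows coincide with those of $A$, so $(Bz)_i = (Az)_i = 0$ for every $i \le n-k+1$ with no extra work. The only new content lives in the final $k-1$ rows, which encode the wrap-around windows of the circular convolution: row $i$ with $i > n-k+1$ computes $\sum_{j=0}^{k-1} z_{((i+j-1) \bmod n)+1}$, a sum of $k$ cyclically consecutive entries of $z$ that crosses the boundary from index $n$ back to index $1$. (One can check on the displayed $n=6$, $k=3$ example that rows $5$ and $6$ pick out indices $5,6,1$ and $6,1,2$ respectively.)

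The heart of the argument is to show each such cyclic window sums to zero, and this is exactly where $k \mid n$ enters. I would extend $z$ to an $n$-periodic sequence via $z_{i+n} = z_i$, as circular convolution implicitly does. Because $z$ already has period $k$ and $k \mid n$, the $k$-periodic pattern is automatically $n$-periodic, so the circular extension agrees with the periodic extension of $z$. Consequently every window of $k$ cyclically consecutive entries---including the wrap-around ones---is a cyclic shift of $(z_1,\ldots,z_k)$ and therefore has sum $\sum_{j=1}^k z_j = 0$. Hence $(Bz)_i = 0$ for the remaining rows as well, giving $z \in \ker(B)$ and, with the earlier observation, $\ker(B) = \ker(A)$.

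The main obstacle is making the alignment precise: one must verify that the wrap-around window, which covers the tail indices $i,\ldots,n$ together with the head indices $1,\ldots,i+k-1-n$, genuinely reassembles into one full period. This hinges on $z_{n+1}=z_1$ being consistent with $k$-periodicity, i.e. on $z_{n+1}=z_{(n+1)\bmod k}=z_1$, which holds precisely because $n \equiv 0 \pmod{k}$. I would flag that without $k \mid n$ this consistency fails and the wrap-around sum acquires a phase shift, so the window is no longer a cyclic shift of a single period; this is exactly why the equality of kernels requires the divisibility hypothesis rather than holding in general.
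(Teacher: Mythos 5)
Your proof is correct and follows essentially the same route as the paper, which simply asserts that the claim "follows from the characterization of $\ker(A)$ and the first observation"; you supply exactly the missing details, using $\ker(B)\subseteq\ker(A)$ for one inclusion and the $k$-periodicity plus zero-sum characterization (together with $k\mid n$ making the wrap-around windows cyclic shifts of a single period) for the other.
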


The first observation follows from the fact that $B$ can be obtained from $A$ by adding rows.
The second observation follows from the characterization of $\ker(A)$ and and the first observation.

\begin{obs}
  $\dim(\ker(A)) = k-1$.
\end{obs}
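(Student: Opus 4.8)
The plan is to read off the dimension directly from the characterization of $\ker(A)$ established in the Proposition. That result shows every $z \in \ker(A)$ is periodic with period $k$, so $z$ is completely determined by its first $k$ entries $(z_1,\ldots,z_k)$, and these entries satisfy the single linear constraint $\sum_{j=1}^k z_j = 0$. This strongly suggests that $\ker(A)$ is isomorphic to the hyperplane $V = \{v \in \mathbb{R}^k : \sum_{j=1}^k v_j = 0\}$, which has dimension $k-1$, and the whole proof amounts to making that isomorphism precise.

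First I would define the linear restriction map $\rho : \ker(A) \to \mathbb{R}^k$ by $\rho(z) = (z_1,\ldots,z_k)$. By the Proposition its image lies in $V$, and conversely every $v \in V$ extends, uniquely by $k$-periodicity, to an element of $\ker(A)$ that restricts to $v$; this gives a two-sided inverse, so $\rho$ is a linear isomorphism onto $V$. Next I would observe that $V$ is the kernel of the nonzero linear functional $v \mapsto \sum_j v_j$ on $\mathbb{R}^k$, hence a hyperplane of dimension $k-1$. Since a linear isomorphism preserves dimension, $\dim(\ker(A)) = \dim(V) = k-1$.

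Concretely, one can exhibit an explicit basis to make the count transparent without appealing to the hyperplane fact. For $i = 1,\ldots,k-1$ let $z^{(i)}$ be the $k$-periodic signal whose first $k$ entries are $e_i - e_{i+1}$. Each such block sums to zero, so $z^{(i)} \in \ker(A)$ by the Proposition; the blocks $e_i - e_{i+1}$ are linearly independent, giving $\dim(\ker(A)) \ge k-1$; and any $z \in \ker(A)$ restricts to a $v \in V$ that is a combination of the $e_i - e_{i+1}$, so $z$ is the same combination of the $z^{(i)}$, giving $\dim(\ker(A)) \le k-1$.

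The only point requiring care is the indexing convention: the expression $z_{j \bmod k}$ must be read with residues in $\{1,\ldots,k\}$, so that the $k$-th entry maps to itself, and one should confirm that the periodic extension of an arbitrary $v \in V$ is well defined on all of $\{1,\ldots,n\}$ and satisfies both conditions of the Proposition. Beyond this bookkeeping I do not expect any real obstacle, since the Proposition has already done the structural work of characterizing the kernel; the dimension count is essentially immediate from it.
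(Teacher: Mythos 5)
Your proof is correct and is exactly the argument the paper implicitly intends: the Observation is stated without proof because it follows immediately from the Proposition's characterization of $\ker(A)$ as the $k$-periodic signals whose first block sums to zero, which is precisely the restriction-map isomorphism onto the hyperplane $\{v \in \mathbb{R}^k : \sum_j v_j = 0\}$ that you spell out. Your explicit basis $e_i - e_{i+1}$ and the remark about reading $j \bmod k$ with residues in $\{1,\ldots,k\}$ are careful additions but do not change the approach.
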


For $x \in \mathbb{R}^n$ and $S \subseteq \{1,\ldots,n\}$ let $x_S$ be
a vector that equals $x$ in the entries indexed by $S$ and zero in the
complement of $S$.  Note that $||w||_1 = ||w_S||_1 +
||w_{\bar{S}}||_1$.

\begin{thm}
  If $Ax=y$ ($Bx=y$) and $|\supp(x)| < \lfloor \frac{n}{k} \rfloor$ then $x$ is the
  unique solution to $Ax=y$ ($Bx=y$) minimizing $||x||_1$.
\end{thm}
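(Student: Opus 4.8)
The plan is to run the standard null-space argument for $\ell_1$ recovery, specialized to the explicit description of $\ker(A)$ from the Proposition. Write $S = \supp(x)$, so the hypothesis gives $|S| \le \lfloor n/k\rfloor - 1 =: m-1$. Any competitor $x'$ with $Ax'=y$ differs from $x$ by an element $z = x'-x \in \ker(A)$, and since $x$ vanishes off $S$, the decomposition $\|w\|_1 = \|w_S\|_1 + \|w_{\bar S}\|_1$ together with the triangle inequality gives
\[ \|x'\|_1 = \|x+z\|_1 \;\ge\; \|x_S\|_1 - \|z_S\|_1 + \|z_{\bar S}\|_1 \;=\; \|x\|_1 + \bigl(\|z_{\bar S}\|_1 - \|z_S\|_1\bigr). \]
Hence it suffices to show $\|z_S\|_1 < \|z_{\bar S}\|_1$ for every nonzero $z \in \ker(A)$; this forces $\|x'\|_1 > \|x\|_1$ and yields uniqueness. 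The case of $B$ reduces to the same statement, because $\ker(B) \subseteq \ker(A)$ by the observation, so every competitor again contributes a $z$ with the structure used below.

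Next I would invoke the Proposition: a nonzero $z \in \ker(A)$ is $k$-periodic, $z_j = z_{j \bmod k}$, with $\sum_{j=1}^k z_j = 0$. Grouping $\{1,\ldots,n\}$ by residue mod $k$, every one of the $k$ residue classes has at least $m = \lfloor n/k\rfloor$ members and all its entries share a common value $z_r$. This yields the first key fact, a lower bound on the total mass: $\|z\|_1 = \sum_r n_r |z_r| \ge m \sum_r |z_r| = m\,P$, where $P := \sum_{j=1}^k |z_j|$ and $n_r \ge m$ denotes the class size. The zero-sum condition supplies the second ingredient: the positive and negative parts of $(z_1,\ldots,z_k)$ have equal total magnitude $P/2$, so no single $|z_r|$ can exceed $P/2$, i.e. $M := \max_r |z_r| \le P/2$, with $M>0$ since $z \neq 0$.

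The core estimate, which I expect to be the crux, then balances the two sides. Since $S$ meets the residue classes in counts $s_r \ge 0$ with $\sum_r s_r = |S| \le m-1$, the mass it can capture is $\|z_S\|_1 = \sum_r s_r |z_r| \le (m-1)M$. Combining this with $\|z\|_1 \ge mP \ge 2mM$ gives
\[ \|z_{\bar S}\|_1 = \|z\|_1 - \|z_S\|_1 \;\ge\; 2mM - (m-1)M \;=\; (m+1)M \;>\; (m-1)M \;\ge\; \|z_S\|_1, \]
the required strict inequality. The only delicate point is getting the two bounds to cooperate: periodicity buys the factor $m$ in $\|z\|_1 \ge mP$, the zero-sum bound $M \le P/2$ converts that factor into the gap between $(m+1)M$ and $(m-1)M$, and the sparsity hypothesis $|S| < m$ is used precisely to cap $\|z_S\|_1$ by $(m-1)M$. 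Tracking these three inputs simultaneously — rather than each individual estimate — is where the argument has to be set up carefully.
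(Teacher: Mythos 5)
Your proposal is correct and follows essentially the same route as the paper: the standard nullspace-property reduction combined with the explicit kernel characterization ($k$-periodicity and zero sum over one period), with the sparsity hypothesis capping how much mass $S$ can capture. The only difference is bookkeeping --- you bound $||z_S||_1 \le (m-1)M$ and $||z||_1 \ge mP \ge 2mM$ separately and subtract, whereas the paper identifies the worst-case $S$ concentrated on the residue class of maximal $|z_j|$; both hinge on exactly the same two facts, and your version makes the extremal step slightly more explicit.
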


\begin{proof}
  Suppose $Ax'=y$ with $x' \neq x$.  We show $||x'||_1 > ||x||_1$.

  $x' = x+z$ with $z \in \ker(A)$ and $z \neq 0$.

  We first show that if $|S| < \lfloor n/k \rfloor$ then
  $||z_{\bar{S}}||_1 > ||z_S||_1$ (the nullspace property).

  Recall that $z_j = z_{j \bmod k}$ and $\sum_{j=1}^k z_j = 0$.

  Let $[k] = \{1,\ldots,k\}$.

  Let $i = \argmax_{j \in [k]} |z_j|$.

  Since $|S| < \lfloor n/k \rfloor$ we can maximize $||z_S||_1$ and minimize
  $||z_{\bar{S}}||_1$ simultaneously by selecting $|S|$ indices that equal
  $i$ modulo $k$ to include in $S$.  In this case,
  $$||z_S||_1 = |S| |z_i|.$$
  $$||z_{\bar{S}}||_1 \ge |S| \left(\sum_{j\in [k] \setminus i} |z_j|\right) + \sum_{j \in [k]} |z_j|.$$

  Note that $\sum_{j=1}^k z_j = 0$ implies $-z_i = \sum_{j \in [k] \setminus i} z_j$
  and therefore $|z_i| \le \sum_{j \in [k] \setminus i} |z_j|$.

  Since $z \neq 0$ we know $\sum_{j \in [k]} |z_j| > 0$.
  We conclude $||z_{\bar{S}}||_1 > ||z_S||_1$.

  The remainder of the proof is a standard argument.

  Let $S = \supp(x)$.  Then
  $$||x'||_1 = ||x+z||_1 = ||(x+z)_S||_1 +
  ||(x+z)_{\bar{S}}||_1 = ||x_S + z_S||_1 + ||z_{\bar{S}}||_1.$$
  By the triangle inequality,
  $$||x_S + z_S||_1 \ge  ||x_S||_1 - ||z_S||_1.$$
  Therefore
  $$||x'||_1 \ge ||x_S||_1 - ||z_S||_1 + ||z_{\bar{S}}||_1.$$
  Since $||z_{\bar{S}}||_1 > ||z_S||_1$ we conclude $||x'||_1 > ||x||_1$.

  The argument is the same if we replace $A$ with $B$.
\end{proof}

\begin{thm}
  If $k>1$ divides $n$, $\exists x$ with $|\supp(x)| = \frac{n}{k}$,
  $Ax=y$ ($Bx=y$), and $x$ is not the unique solution to $Ax=y$ ($Bx=y$)
  minimizing $||x||_1$.
\end{thm}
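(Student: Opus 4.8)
The plan is to realize the boundary case of Theorem 1 explicitly: for the all-ones target $y=(1,\ldots,1)$ I will exhibit two distinct $0/1$ signals of support size exactly $n/k$ that both satisfy the constraints and have the same $\ell_1$ norm. This shows the strict nullspace inequality of Theorem 1 can only begin to fail at $|\supp(x)| = \lfloor n/k\rfloor = n/k$, so the bound is tight.

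First I would use $k \mid n$ to partition $\{1,\ldots,n\}$ into the $k$ residue classes $S_1,\ldots,S_k$ modulo $k$, each of size exactly $n/k$, and write $\mathbf{1}_{S_a}$ for the indicator vector of $S_a$. The elementary fact driving the construction is that every window of $k$ consecutive indices (taken cyclically in the case of $B$) contains exactly one index of each residue class; hence $A\mathbf{1}_{S_a}$ and $B\mathbf{1}_{S_a}$ both equal the all-ones vector, independently of $a$. Equivalently, for distinct residues $a\neq b$ in $\{1,\ldots,k\}$ the difference $z = \mathbf{1}_{S_a} - \mathbf{1}_{S_b}$ is $k$-periodic with $\sum_{j=1}^k z_j = z_a + z_b = 0$, so $z \in \ker(A) = \ker(B)$ by the Proposition and Observation 2.

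Next, since $k>1$ I can pick $a \neq b$, set $x = \mathbf{1}_{S_a}$, $x' = \mathbf{1}_{S_b}$, and take $y = Ax$. Then $x' = x - z$ with $z \in \ker(A)$, so both $x$ and $x'$ solve $Ax=y$ (and $Bx=y$); both have support size $n/k$; and $\|x\|_1 = \|x'\|_1 = n/k$ with $x \neq x'$.

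The step to get right is the passage from ``two feasible points of equal norm'' to ``$x$ is not the unique minimizer'': because $x' \neq x$ is feasible with $\|x'\|_1 \le \|x\|_1$, either $x$ fails to minimize $\|\cdot\|_1$ or $x$ and $x'$ are two distinct minimizers, and in either case $x$ is not the unique $\ell_1$ solution. To state the conclusion crisply I would additionally verify that $n/k$ is the true minimum: writing any solution as $\mathbf{1}_{S_a} + w$ with $w\in\ker(A)$ and grouping coordinates by residue class gives $\|\mathbf{1}_{S_a}+w\|_1 = (n/k)\bigl(|1+w_a| + \sum_{j\neq a}|w_j|\bigr)$, and the constraint $\sum_{j=1}^k w_j = 0$ together with the triangle inequality forces this to be at least $n/k$. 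Hence $x$ and $x'$ are both genuine $\ell_1$-minimizers, confirming non-uniqueness and the tightness of Theorem 1.
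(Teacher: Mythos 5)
Your proposal is correct and uses essentially the same construction as the paper: two indicator vectors of distinct residue classes modulo $k$ (the paper picks residues $1$ and $2$), both mapping to the all-ones vector with equal $\ell_1$ norm. The extra verification that $n/k$ is the true minimum is a nice bonus but not needed, since a second feasible point with $\|x'\|_1 \le \|x\|_1$ already rules out unique minimization.
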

\begin{proof}
  Let $x_i = 1$ when $i$ equals 1 modulo
  $k$ and $x_i = 0$ otherwise.  Let $z_i = 1$ when $i$ equals 2 modulo
  $k$ and $z_i = 0$ otherwise.  Then $Ax=Az=\mathbbm{1}$ ($Bx=Bz=\mathbbm{1}$), and $||x||_1 = ||z||_1$.
\end{proof}

\begin{obs} The previous theorem also holds replacing $||x||_1$ with $||x||_0$.
  This means the recovery guarantees we obtain with convex
  optimization are as good as an information theoretic bound.  That
  is, when we know $|\supp(x)| < n/k$ we can recover $x$ using convex
  optimization, but when $|\supp(x)| = n/k$ knowing the size of the
  support is not enough information to recover $x$.
\end{obs}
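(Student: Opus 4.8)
The plan is to reuse the explicit pair of signals from the proof of the previous theorem, since $||\cdot||_0$ measures exactly the quantity --- the support size --- on which that construction was already symmetric. Recall $x_i = 1$ for $i \equiv 1 \pmod k$ and $z_i = 1$ for $i \equiv 2 \pmod k$, with both zero elsewhere. Each has precisely one nonzero entry in every block of $k$ consecutive indices, so $||x||_0 = ||z||_0 = n/k = |\supp(x)|$. Since $x \neq z$ and $Ax = Az = \mathbbm{1}$ ($Bx = Bz = \mathbbm{1}$), the two are distinct solutions of the same linear system achieving the same value of $||\cdot||_0$. Hence $x$ is not the unique $||\cdot||_0$-minimizer: if it attains the minimum then so does $z$, and if it does not then it is not a minimizer at all.

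To make the information-theoretic reading precise I would additionally verify that $n/k$ is the smallest support size of any solution, so that $x$ and $z$ are genuinely among the sparsest signals consistent with $y = \mathbbm{1}$. The key step is a disjoint-window argument: the $n/k$ windows $[(j-1)k+1, jk]$ for $j = 1, \ldots, n/k$ are pairwise disjoint and each is a row of $A$ (and of $B$), since $k$ divides $n$ and the largest start $n-k+1$ is the last valid row. Each corresponding entry of $y$ equals $1 \neq 0$, so every window must contain at least one nonzero entry of any solution; as the windows are disjoint this forces at least $n/k$ nonzeros. Therefore the minimum of $||\cdot||_0$ over the solution set is exactly $n/k$, attained by at least the two distinct signals $x$ and $z$.

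I expect the only real subtlety to be conceptual rather than computational: $||\cdot||_0$ is not a norm, so the nullspace-property and triangle-inequality machinery behind the first theorem does not transfer, and one cannot argue uniqueness (or its failure) through convexity. The point, however, is that no such machinery is needed --- the failure of unique recovery at support size $n/k$ is witnessed by an explicit collision, and this collision is invariant under replacing $||\cdot||_1$ by $||\cdot||_0$. Combined with the minimality argument above, this shows that even an (intractable) exact $||\cdot||_0$ minimization, or equivalently an oracle told only that $|\supp(x)| = n/k$, cannot distinguish $x$ from $z$. This matches the guarantee of the first theorem, where the strict inequality $|\supp(x)| < \lfloor n/k \rfloor$ was required, and so establishes that the convex relaxation recovers sparse signals up to the information-theoretic limit.
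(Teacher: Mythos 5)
Your proof is correct and follows the same route the paper intends: the Observation is justified by the very pair $x, z$ constructed in the proof of the previous theorem, and since that pair satisfies $||x||_0 = ||z||_0 = n/k$ and $Ax = Az = \mathbbm{1}$, the collision immediately rules out unique recovery by $\ell_0$ minimization as well. The one thing you add that the paper does not spell out is the disjoint-window argument showing that $n/k$ is in fact the \emph{minimum} of $||\cdot||_0$ over the solution set (each of the $n/k$ disjoint blocks $[(j-1)k+1, jk]$ is a row of $A$ when $k$ divides $n$, and each must contain a nonzero since the corresponding measurement is $1$). This step is worth having: without it, ``$x$ is not the unique $\ell_0$ minimizer'' could in principle hold vacuously because neither $x$ nor $z$ is a minimizer, whereas your argument pins down that both are genuinely sparsest solutions, which is what the information-theoretic reading of the Observation actually requires. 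Your closing remark that the nullspace-property machinery is irrelevant here is also on point; no convexity is used or needed.
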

  
Related work: For the circular convolution case, Theorem 9 from
\cite{Donoho1989} also implies that when $x$ is sufficiently sparse we
can recover $x$ by minimizing $||x||_1$ subject to the constraints
defined by $y$.  When $k$ divides $n$ the Fourier transform of $h$ has
$k-1$ zeros.  In this case Theorem 9 from \cite{Donoho1989} implies we
can recover $x$ when $|\supp(x)| < \frac{n}{2(k-1)}$.  Here we give a
stronger result, showing the same optimization problem recovers $x$
when $|\supp(x)| < \frac{n}{k}$.  We also show our bound is tight and
that it matches the information theoretic limit in this case.  The
general results in \cite{Candes06} are also related to the problems
described here, but do not apply directly to our setting.

\bibliography{box}
\bibliographystyle{plain}

\end{document}